\theoremstyle{plain}
\newtheorem{thm}{Theorem}[section]
\newtheorem{lemma}[thm]{Lemma}
\theoremstyle{definition}
\newtheorem{defn}[thm]{Definition}
\newcommand{\union}{\cup}
\newcommand{\bigunion}{\bigcup}
\newcommand{\inters}{\cap}
\newcommand{\R}{\mathbb{R}}
\newcommand{\F}{\mathbb{F}}
\newcommand{\N}{\mathbb{N}}
\newcommand{\G}{\mathrel{G}}
\newcommand{\Gpdx}{G_{\text{p}}}
\newcommand{\define}[1]{\emph{#1}}
\newcommand{\from}{\colon}
\renewcommand{\subset}{\subseteq}
\newcommand{\Nbhd}{\mathrm{N}}
\newcommand{\card}[1]{\lvert #1 \rvert}
\newcommand{\Z}{\mathbb{Z}}
\DeclareMathOperator{\Hall}{Hall}
\DeclareMathOperator{\ran}{ran}
\begin{document}

\title{Baire measurable paradoxical decompositions via matchings}

\thanks{The first author was partially supported by NSF grants DMS-1204907
and DMS-1500974 and the John Templeton Foundation under Award No.\ 15619.}

\author{Andrew Marks}
\address{Department of Mathematics, University of California at Los Angeles}
\email{marks@math.ucla.edu}

\author{Spencer Unger}
\address{Department of Mathematics, University of California at Los Angeles}
\email{sunger@math.ucla.edu}

\begin{abstract}
  We show that every locally finite bipartite Borel graph satisfying a
  strengthening of Hall's condition has a Borel perfect matching on some
  comeager invariant Borel set. 
  We apply this to show that if a group
  acting by Borel automorphisms on a Polish space has a paradoxical
  decomposition, then it admits a paradoxical decomposition using pieces
  having the Baire property. This strengthens a theorem of Dougherty and
  Foreman who showed that there is a paradoxical decomposition of the unit
  ball in $\R^3$ using Baire measurable pieces.
  We also obtain a Baire category solution to
  the dynamical von Neumann-Day problem: if $a$ is a nonamenable action of
  a group on a Polish space $X$ by Borel automorphisms, then there is a
  free Baire measurable action of $\F_2$ on $X$ which is Lipschitz with
  respect to $a$. 
\end{abstract}

\maketitle

\section{Introduction}

The Banach-Tarski paradox states that the unit ball in $\R^3$ is
equidecomposable with two unit balls in $\R^3$ by rigid motions. 
In 1930, Marczewski asked whether there is such an equidecomposition where
each piece has the Baire property~\cite{MR1251963}. Using an intricate
construction, Dougherty and Foreman gave a positive answer to this
question~\cites{MR1190902,MR1227475}. 
The key result used in their proof is that every free action of
$\F_2$ on a Polish (separable, completely metrizable) space by homeomorphisms has a paradoxical decomposition using pieces with
the Baire property, where $\F_2$ is the free group on two generators.

Recall that a group $\Gamma$ is said to \define{act by Borel automorphisms}
on a Polish space $X$ if for every $\gamma \in \Gamma$, the function $x
\mapsto \gamma \cdot x$ is Borel. We generalize Dougherty and Foreman's
theorem to completely characterize which group actions by Borel
automorphisms have paradoxical decompositions using pieces with the Baire
property. 

\begin{thm}\label{baire_paradox}
  Suppose a group acts on a Polish space by Borel automorphisms, 
  and this action has a paradoxical decomposition. Then the
  action has a paradoxical decomposition where each piece has the Baire
  property.
\end{thm}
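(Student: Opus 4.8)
The plan is to encode a paradoxical decomposition as a perfect matching of a locally finite bipartite Borel graph, invoke our Borel matching theorem to produce a Borel matching on a comeager invariant Borel set, and then dispose of the meager remainder by hand, exploiting that every subset of a meager set has the Baire property.

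Write $A \sim B$ for $\Gamma$-equidecomposability, meaning a bijection $A \to B$ that agrees with the action of a single group element on each of finitely many (arbitrary) pieces. Having a paradoxical decomposition says exactly $X \sim X \sqcup X$, and a standard substitution argument (equidecomposability is preserved under disjoint unions) upgrades this to the threefold paradox $X \sim X \sqcup X \sqcup X$, witnessed by a bijection $\psi \from X \times \{0,1,2\} \to X$ together with a finite set $S \subseteq \Gamma$ containing $1$ and closed under inverses such that each restriction of $\psi$ agrees with some $s \in S$. I then define the locally finite bipartite Borel graph $H$ with parts $X \times \{0,1\}$ and $X$, joining $(x,i)$ to $y$ precisely when $y = s \cdot x$ for some $s \in S$. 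A perfect matching of $H$ is the same data as a piecewise-$\Gamma$ bijection $X \times \{0,1\} \to X$, and hence a paradoxical decomposition.

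Next I verify the strengthened Hall condition. For a finite $F \subseteq X$ on the right side, $\Nbhd(F) = (S^{-1}F) \times \{0,1\}$, so $\card{\Nbhd(F)} = 2\card{S^{-1}F} \geq 2\card{F}$ since $1 \in S$. For a finite $F \subseteq X \times \{0,1\}$ on the left, let $A \subseteq X$ be its projection, so $\card{F} \leq 2\card{A}$; the three ranges $\psi(\cdot,0), \psi(\cdot,1), \psi(\cdot,2)$ are disjoint and each contained in $S A$, forcing $\card{SA} \geq 3\card{A}$, whence $\card{\Nbhd(F)} = \card{SA} \geq 3\card{A} \geq \tfrac{3}{2}\card{F}$. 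Thus $H$ expands every finite set on either side by a factor at least $\tfrac{3}{2}$, the strengthening of Hall's condition our matching theorem requires. Applying that theorem yields a Borel perfect matching of $H$ on a comeager Borel set invariant under the equivalence relation generated by $S$; such a set has the form $(C \times \{0,1\}) \sqcup C$ for a comeager $\langle S \rangle$-invariant Borel set $C \subseteq X$, and the matching restricts to a Borel (hence Baire measurable) piecewise-$\Gamma$ bijection $C \times \{0,1\} \to C$, i.e.\ a Baire measurable paradoxical decomposition of $C$. Finally, $D = X \setminus C$ is meager and $\langle S \rangle$-invariant, hence invariant under every group element used by the original decomposition; since paradoxical decompositions restrict to invariant sets, the original decomposition restricts to one of $D$, and because every subset of the meager set $D$ is meager it automatically has the Baire property. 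Concatenating the decompositions across $X = C \sqcup D$ produces a paradoxical decomposition of $X$ all of whose pieces have the Baire property.

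I expect the only real work here (granting the matching theorem) to be arranging the two-sided expansion: a single $X \sim X \sqcup X$ only yields ordinary Hall on each side, so passing to the threefold paradox $X \sim X \sqcup X \sqcup X$ is what manufactures expansion factor $\tfrac{3}{2}$ on the heavier side while leaving factor $2$ on the other. The remaining care is bookkeeping—checking that "invariant" in the matching theorem refers to the equivalence relation generated by $S$, which contains all group elements of the original decomposition, so that the restriction to the meager complement $D$ is legitimate and the two pieces glue into a single finite decomposition of $X$.
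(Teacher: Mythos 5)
Your proposal is correct and follows essentially the same route as the paper: encode the paradox as a perfect matching problem for a locally finite bipartite Borel graph, amplify Hall's condition to a uniform $(1+\epsilon)$-expansion by composing the paradox with itself, apply Theorem~\ref{bairely_hall} to get a Borel matching on an invariant comeager Borel set, and dispose of the invariant meager complement using pieces that are automatically Baire measurable. The only differences are cosmetic: the paper amplifies by passing from $S$ to $S^2$ (getting expansion factor $2$ on both sides) rather than via the threefold paradox (factor $3/2$), and it handles the meager complement by extending the matching there with the axiom of choice (via Hall's theorem) rather than restricting the original decomposition---for your variant, just make sure $S$ is enlarged to contain the group elements of the original decomposition so that the invariant complement really is invariant under them, which costs nothing since adding elements to $S$ only adds edges and preserves the expansion estimates.
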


This also yields a new proof of Dougherty and Foreman's theorem that the
unit ball $B$ in $\R^3$ is equidecomposable with two unit balls using
pieces with the Baire property. The action of the group of rotations on
the ball $B \setminus \{\mathbf{0}\}$ without the origin has a paradoxical
decomposition. Since this action is continuous (and hence Borel) and $B
\setminus \{\mathbf{0}\}$ is Polish, by Theorem~\ref{baire_paradox}, this
action has a paradoxical decomposition using pieces with the Baire
property. To finish, we combine this with the easy fact that $B$ is
equidecomposable with $B \setminus \{\mathbf{0}\}$ using pieces which are
Borel (see \cite[Corollary 3.10]{MR1251963}).

In contrast to Theorem~\ref{baire_paradox}, we also show that there is an
action of a finitely generated group on a Polish space $X$ by Borel
automorphisms, and two Borel sets $B_0, B_1 \subset X$ which are
equidecomposable with respect to this action, but not equidecomposable
using pieces with the Baire property. 

An action of a group $\Gamma$ on a space $X$ is said to be
\define{amenable} if $X$ admits a finitely additive probability measure 
which is invariant under the action of $\Gamma$. A discrete group
$\Gamma$ is said to be \define{amenable} if the left translation action of
$\Gamma$ on itself is amenable. The notion of amenability was introduced by
von Neumann in response to the Banach-Tarski paradox as an obstruction to
having a paradoxical decomposition. A theorem of Tarski states that an
action of a group is nonamenable if and only if the action admits a
paradoxical decomposition (see \cite[Corollary 9.2]{MR1251963}).

Since every subgroup of an amenable group is amenable and $\F_2$ is easily
seen to be nonamenable, it is natural to ask whether a group is nonamenable
if and only if it contains $\F_2$ as a subgroup. This problem was first
posed in print by Day~\cite{MR0044031} and is sometimes known as the von
Neumann conjecture. It was answered in the negative by 
Ol{\cprime}shanskii~\cite{MR586204}. Despite this negative answer, 
interesting positive results have been proven
for variants of this question where the role of subgroup is replaced by more
general notions. Whyte has given a positive solution to the von Neumann-Day
problem in the setting of geometric group theory \cite{MR1700742}, and
Gaboriau and Lyons have given a positive solution in the setting of
measurable group theory \cite{MR2534099}. We give the following Baire
category solution to the dynamical von Neumann-Day problem. If $a$ and $b$
are actions of $\Gamma$ and $\Delta$ respectively on a space $X$, then the
action $b$ is said to be \define{$a$-Lipschitz}, if for every $\delta \in
\Delta$, there is a finite set $S \subset \Gamma$ such that $\forall x \in
X \exists \gamma \in S (\delta \cdot_b x = \gamma \cdot_a x)$. Recall that
a function is said to be \define{Baire measurable} if the preimage of every
Borel set has the Baire property.

\begin{thm}\label{dynamical_vnd}
  Suppose $a$ is a nonamenable action of a group $\Gamma$ on a Polish space
  $X$ by Borel automorphisms. 
  Then there is a free $a$-Lipschitz action 
  of $\F_2$ on $X$ by Baire measurable automorphisms. 
\end{thm}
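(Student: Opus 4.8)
The plan is to deduce the theorem from the matching theorem together with Tarski's characterization of (non)amenability. First I would pass from the dynamical hypothesis to a purely combinatorial one: since $a$ is nonamenable, Tarski's theorem (\cite[Corollary 9.2]{MR1251963}) gives a paradoxical decomposition of the action, which I will use as a quantitative expansion statement about the orbit graph of the action with respect to a fixed finite amount of translation data. The object I am aiming to construct is a Borel assignment, on a comeager invariant set, of an $\F_2$-labeled $4$-regular tree structure to each orbit, realized by two $a$-Lipschitz automorphisms $\alpha,\beta$ whose combined orbit graph at each point is the Cayley graph of $\F_2$. The reason for targeting the tree is that freeness of the resulting action is then automatic: if each orbit is literally a copy of the Cayley graph of $\F_2$, no nontrivial reduced word can fix a point. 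Equivalently, I can verify freeness through a ping-pong partition, splitting each orbit into the ``first letter'' pieces $W_\alpha, W_{\alpha^{-1}}, W_\beta, W_{\beta^{-1}}$ and checking the Klein inclusions such as $\alpha(X\setminus W_{\alpha^{-1}})\subseteq W_\alpha$.

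Second, I would encode this tree structure as a perfect matching. One sets up an auxiliary locally finite bipartite Borel graph $\mathcal{H}$ whose vertices are built from $X$ together with finitely many $\Gamma$-translates of each point (keeping the translation data finite, so that the maps extracted at the end are genuinely $a$-Lipschitz), arranged so that a perfect matching of $\mathcal{H}$ glues the local data into a globally consistent $\F_2$-tree on each orbit. The crucial step is to check that $\mathcal{H}$ satisfies the strengthening of Hall's condition required by our matching theorem: this is exactly where paradoxicality enters, since a paradoxical decomposition supplies, for every finite set of vertices, a neighborhood expanded by a factor bounded away from $1$ (morally by a factor of $2$), which is more than the expansion the matching theorem demands. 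Applying the matching theorem then yields a Borel perfect matching of $\mathcal{H}$ on a comeager invariant Borel set $C$.

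Finally, I would read off $\alpha$ and $\beta$ from the matching on $C$: each is a bijection of $C$ given piecewise by the fixed finite translation data, hence $a$-Lipschitz and Baire measurable, and the matching conditions guarantee that together they equip every orbit in $C$ with the Cayley graph of $\F_2$, so the ping-pong inclusions hold and the action is free on $C$. To obtain an action on all of $X$ I would dispose of the meager invariant complement $X\setminus C$ by a routine transfinite exhaustion, re-running the construction on what remains; since each stage alters the maps only on a meager set, Baire measurability of the final $\alpha,\beta$ is preserved. The step I expect to be the main obstacle is the verification of the strengthened Hall condition for $\mathcal{H}$, that is, converting Tarski's qualitative paradoxicality into the uniform quantitative expansion the matching theorem requires across all orbits at once, while simultaneously designing $\mathcal{H}$ so that its matchings genuinely produce a free $\F_2$ rather than merely two aperiodic automorphisms whose joint orbit graph could still contain cycles, i.e.\ nontrivial relations.
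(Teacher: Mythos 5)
Your overall strategy is the paper's: use Tarski's theorem to get paradoxicality, amplify it to the quantitative expansion condition $\Hall_{1,1}$ (the paper does this by passing from $S$ to $S^2$ in $\Gpdx(a,S)$), apply Theorem~\ref{bairely_hall} on a comeager invariant Borel set, read off $a$-Lipschitz maps, and extend over the meager invariant complement by choice, which costs nothing for Baire measurability. The gap is exactly the step you flag as the ``main obstacle'' and then leave unresolved: you never construct the auxiliary graph $\mathcal{H}$, and no graph of the kind you describe can have the property that \emph{every} perfect matching induces a free $\F_2$-structure. The conditions defining a perfect matching of a locally finite graph built from finite translation data are local, while acyclicity of the induced orbit graph is not: a $4$-regular graph containing one very long cycle per component is isomorphic to the $4$-regular tree on every ball of radius less than half the cycle length, and an $\F_2$-edge-labeling can be locally consistent (all four labels at each vertex, inverse edges labeled inversely) while the induced action has nontrivial relations. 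So a matching can hand you two Lipschitz automorphisms, but it cannot certify freeness; this is not a technicality but the crux of the problem.

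The paper's proof is organized around precisely this difficulty and needs two separate arguments where you propose one. First, Lemma~\ref{4-regular tree} applies Theorem~\ref{bairely_hall} to the graph $\Gpdx'(a,S^2)$ on $\{0,1,2,3\}\times X$, obtaining three injective $a$-Lipschitz maps $f_0,f_1,f_2$ with disjoint ranges partitioning $X$ (Whyte's structure). The graph these generate is \emph{not} acyclic --- consistent with the locality obstruction above --- but since every vertex has in-degree one, each component contains at most one cycle, and the paper rewires these cycles by hand, in a Borel way using a Borel linear order, to produce a $4$-regular acyclic $a$-Lipschitz graph. Second, and this is the part your proposal cannot absorb into the matching: Lemma~\ref{F_2 action} shows by a separate inductive construction, using the sparse decomposition of Lemma~\ref{comeager} and discarding a \emph{further} meager set, that this tree is generated by a free Borel action of $\F_2$ on a smaller comeager invariant set. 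The paper remarks, citing results of Marks, that a $4$-regular acyclic Borel graph need not be generated by a free Borel $\F_2$-action at all, so this labeling/orientation step provably cannot be extracted from the Borel data already in hand; and the natural matching reformulations of the orientation problem lead to $2$-regular-type bipartite graphs, which fail $\Hall_{\epsilon,1}$ for every $\epsilon>0$ and sit exactly in the regime of Laczkovich's counterexample. To repair your argument, split it as the paper does: matching theorem $\Rightarrow$ Whyte structure $\Rightarrow$ cycle rewiring $\Rightarrow$ $4$-regular acyclic Lipschitz graph, followed by a separate comeager-set construction of the free $\F_2$-action generating it.
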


An important tool we use to prove both Theorems~\ref{baire_paradox}
and \ref{dynamical_vnd} is a connection between paradoxical decompositions
and perfect matchings. As we describe in Section~\ref{sec:paradox}, if $a$ is an action of a group $\Gamma$ on $X$, then
to each finite symmetric set $S \subset \Gamma$ we can associate a graph
$\Gpdx(a,S)$ so that $\Gpdx(a,S)$ has a perfect matching if and only if $a$
has a paradoxical decomposition using group elements from $S$. 
We exploit this connection by showing that we can
generically construct perfect matchings for Borel graphs satisfying a
strengthening of Hall's condition, and we can amplify Hall's condition in the graph $\Gpdx(a,S)$
to this stronger condition in $\Gpdx(a,S^2)$.
Recall that a bipartite graph $G$ with
bipartition $\{B_0, B_1\}$ satisfies \define{Hall's condition} if for every
finite subset $F$ of $B_0$ or $B_1$, $\card{\Nbhd_G(F)} \geq \card{F}$,
where $\Nbhd_G(F)$ is the set of vertices adjacent to elements of $F$ that
are not contained in $F$. 
Hall's theorem states that a locally finite bipartite graph has a perfect
matching if and only if it satisfies Hall's condition. 

\begin{thm}\label{bairely_hall}
  Suppose $G$ is a locally finite bipartite Borel graph on a Polish space
  with bipartition $\{B_0, B_1\}$ and there exists an $\epsilon > 0$ such
  that for every finite set $F$ with $F \subset B_0$ or $F \subset B_1$,
  $\card{\Nbhd_G(F)} \geq (1 + \epsilon) \card{F}$. Then there is a Borel
  perfect matching of $G$ restricted to some $G$-invariant comeager Borel set. 
\end{thm}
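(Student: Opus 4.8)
The plan is to build a single Borel matching of $G$ whose set of unmatched vertices is meager, and then to pass to a $G$-invariant comeager Borel set by saturating. First I would put $G$ into a workable form: by the Feldman--Moore theorem the connectedness equivalence relation $E_G$ is generated by a Borel action of a countable group, so the edge set of $G$ is covered by countably many Borel partial injections, and for any Borel matching $M$ the finite $M$-alternating paths through a vertex can be enumerated and selected in a Borel way. Since the saturation under $E_G$ of a meager set is again meager (the generators may be taken to be homeomorphisms off a meager invariant set), it suffices to produce a Borel matching $M$ on $X$ whose unmatched set $U$ is meager: then $X \setminus [U]_{E_G}$ is a comeager $G$-invariant Borel set on which $M$ is a perfect matching.

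The combinatorial engine is an augmenting-path analysis. I would construct a sequence of Borel matchings $M_0, M_1, \dots$ that is increasing in the sense that the set of matched vertices grows, by repeatedly selecting, in a Borel way, a maximal vertex-disjoint family of $M_n$-augmenting paths and flipping along them; since the matched set is monotone under augmentation, the $M_n$ stabilize on each matched vertex and yield a limit Borel matching $M$ having no finite augmenting paths. The role of Hall's condition is the following local lemma: if $v$ is $M$-unmatched and the set $A$ of vertices reachable from $v$ by $M$-alternating paths is finite, then $M$ has an augmenting path from $v$. Indeed, otherwise every vertex of $A$ on the side opposite $v$ is $M$-matched into $A$ and $\Nbhd_G(A \cap B_0) \subset A \cap B_1$, so $M$ witnesses $\card{A \cap B_0} = \card{A \cap B_1} + 1$, contradicting $\card{A \cap B_1} \geq \card{\Nbhd_G(A \cap B_0)} \geq \card{A \cap B_0}$. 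Hence every vertex of the limit unmatched set $U$ has an \emph{infinite} alternating-reachable set.

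This is exactly where the strengthened hypothesis must do its work, and I expect it to be the main obstacle. Running the same inequality with the full $(1 + \epsilon)$ expansion shows that the $M$-alternating tree rooted at any $v \in U$ branches: within alternating-distance $2n$ of $v$ it contains at least roughly $(1 + \epsilon)^n$ vertices, all of them matched. The plan is to organize the augmentation into rounds in which the $n$-th round flips all augmenting paths of length at most $2n+1$, so that a vertex surviving round $n$ carries such a large, entirely matched, branching alternating tree, and then to argue that the set of vertices carrying arbitrarily large branching trees of matched vertices is meager. In the measure-theoretic analogue (the Lyons--Nazarov theorem on measurable matchings of expander graphings) this last step is a mass-transport computation bounding the measure of $U$. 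The category setting has no direct analogue of mass transport, so the crux is to replace it by a genuinely Baire-category argument: using the exponential branching to spread the ``defect'' of each $v \in U$ across an exponentially growing matched region, together with a genericity/Kuratowski--Ulam argument, to conclude that $U$ cannot be comeager in any nonempty open set and is therefore meager.

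Once $U$ is shown to be meager, the reduction of the first paragraph finishes the proof: saturating keeps $U$ meager, and the limit matching $M$ restricts to a Borel perfect matching on the comeager $G$-invariant Borel set $X \setminus [U]_{E_G}$.
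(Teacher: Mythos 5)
There is a genuine gap, and it sits exactly where you predict: the meagerness of the unmatched set $U$ is never proved, and this step is not a technical detail but the entire content of the theorem. Your plan is the Lyons--Nazarov argument, whose final step bounds the \emph{measure} of $U$ by a mass-transport estimate, and mass transport requires the measure to be invariant under the automorphisms generating the graph. In the present setting no invariance of any kind is available: the hypothesis is just a Borel graph on a Polish space, and Borel automorphisms need not even preserve the meager ideal. A set each of whose points ``spreads its defect'' over a huge disjoint matched region can perfectly well be comeager, so there is nothing for the branching estimate to push against. Indeed, the paper points out (the example of Kechris and Marks mentioned in the introduction) that the \emph{measure} analogue of this theorem is false for non-measure-preserving graphs even with $\card{\Nbhd_G(F)} \geq k\card{F}$ for arbitrary $k$; so any correct proof must use category-specific structure from the start, not as a final patch. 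The paper's actual route is entirely different: it first applies Lemma~\ref{comeager} to obtain a comeager $G$-invariant Borel set $A = \bigcup_n A_n$ decomposed into Borel pieces $A_n$ whose points have pairwise distance greater than $f(n)$, with $\sum_n 8/f(n) < \epsilon$; it then builds a genuinely increasing sequence of Borel partial matchings (edges are added, never flipped or removed), at stage $n$ matching each surviving point of $A_n$ along an edge whose removal preserves Hall's condition (such an edge exists because $G - M_{n-1}$ satisfies Hall's condition, hence has a perfect matching). Sparseness of $A_n$ is what keeps the quantitative condition alive: large $(G-M_n)^2$-connected sets lose at most a $(8/f(n))$-fraction of their neighbors, small sets are protected by the single-edge Hall preservation, so $G - M_n$ satisfies $\Hall_{\epsilon_n, f(n)}$ with $\epsilon_n = \epsilon - \sum_{i \leq n} 8/f(i) > 0$. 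All category-theoretic loss happens once, in Lemma~\ref{comeager}; the matching construction afterwards is purely combinatorial.

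Two subsidiary steps of your reduction are also false as stated. First, ``since the matched set is monotone under augmentation, the $M_n$ stabilize on each matched vertex'' does not follow: augmenting flips can change the partner of an already matched vertex infinitely often, so the pointwise limit matching need not exist. (Lyons--Nazarov repair this with a Borel--Cantelli argument from summable measure bounds --- again unavailable in category.) Second, the saturation $[U]_{E_G}$ of a meager set under a locally finite Borel graph need not be meager, and your parenthetical justification that the Feldman--Moore generators ``may be taken to be homeomorphisms off a meager invariant set'' fails in general: take $X = [0,1] \sqcup [2,3]$, let $A \subset [0,1]$ be a Cantor set, and let $G$ join $A$ to $[2,3]$ by a Borel bijection $T$; then $A$ is meager but $[A]_{E_G} \supseteq [2,3]$ is nonmeager, and no restriction of $T$ to an invariant comeager set is a homeomorphism, since it carries a relatively closed nowhere dense set onto a relatively open one. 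The paper's Lemma~\ref{comeager} is engineered precisely to sidestep this: rather than saturating after the fact, it builds comeagerness and $G$-invariance simultaneously, choosing the sparse sets so that all of their images under the (countably many) maps generating the connectedness relation are nonmeager in every basic open set, and then intersecting.
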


This theorem is optimal in the sense that the $\epsilon$ cannot be made
equal to $0$. Laczkovich has given an example
of a $2$-regular Borel bipartite graph which satisfies Hall's condition but
has no Borel perfect matching restricted to any comeager
set~\cite{MR947676}. Conley and Kechris have also extended this example to
show that for every even $d$ there is a $d$-regular Borel bipartite graph
which satisfies Hall's condition but has no Borel perfect matching
restricted to any comeager set~\cite{MR3019078}. We note
that while both the theorems of Laczkovich and Conley-Kechris are stated in
the measure theoretic context, their use of ergodicity can be replaced with
generic ergodicity to prove these results as we have stated them. 

In contrast to Theorem~\ref{bairely_hall}, Kechris and Marks have observed
that for every $k \geq 1$, there is a bipartite Borel graph $G$ of bounded
degree on a standard probability space $(X,\mu)$ with Borel bipartition
$\{B_0, B_1\}$ such that for every finite subset $F$ of $B_0$ or $B_1$,
$\card{\Nbhd_G(F)} \geq k \card{F}$, yet $G$ has no Borel perfect matching
restricted to any $\mu$-conull set~\cite{KM}. 

The papers~\cite{CM}, \cite{MR2825538}, and \cite{Marks} also contain some
related work on Borel matchings. See~\cite{KM} for a recent survey of work
on matchings in the setting of Borel graph combinatorics, which also
contains a different proof of Dougherty and Foreman's result. 

The authors would like to thank Clinton Conley, Matt Foreman, Alekos
Kechris, Thomas Sinclair, Simon Thomas, and Robin Tucker-Drob for helpful
conversations. The authors would also like to thank the referee for a 
careful reading and many helpful suggestions. 

\section{Definitions}\label{defns}

Our conventions in graph theory follow~\cite{MR2744811}. A \define{perfect
matching} $M$ of a graph $G$ is a set of edges from $G$ such that each
vertex of $G$ is incident to exactly one edge of $M$. If $F$ is a set of
vertices in a graph $G$, the set of neighbors of points in $F$ that are not
contained in $F$ is denoted $\Nbhd_G(F) = \{y \notin F : \exists x \in F (x
\G y)\}$. A subset of the vertices of a graph $G$ is said to be
\define{$G$-invariant} if it is a union of connected components of $G$. A
graph is \define{locally finite (respectively countable)}  if the degree of
every vertex is finite (respectively countable),
and is \define{$d$-regular} if the degree of every vertex is $d$. The \define{closed
$r$-ball} around a vertex $x$ in $G$ is the set $\{y : d_G(x,y) \leq r\}$,
where $d_G(x,y)$ is the distance from $x$ to $y$ in $G$. If $G$ is a graph
with vertex set $X$ and $A \subset X$, then we use $G \restriction A$ to
denote the induced graph on $A$. So $G \restriction A$ is the graph with
vertex set $A$ whose edges are all the edges in $G$ between elements of
$A$. 
If $G$ is a graph on $X$, then we let $G^{\leq n}$ be the graph on
$X$ where $x \mathrel{G^{\leq n}} y$ if $1 \leq d_G(x,y) \leq n$ and
$G^{n}$ be the graph on $X$ where $x \mathrel{G^n} y$ if $d_G(x,y) = n$.

The descriptive combinatorics of Borel graphs was first
systematically studied by Kechris, Solecki and Todorcevic~\cite{MR1667145}.
A \define{Borel graph} $G$ on a Polish space $X$ (or a standard Borel space
$X$) is a symmetric irreflexive relation on $X$ that is Borel as a subset of
$X \times X$. That is, we will identify the graph with its edge relation.
By \cite[Proposition
4.10]{MR1667145} (which is a corollary of the proof of the Feldman-Moore
theorem~{\cite[Theorem 1.3]{MR2095154}}), if $G$ is a locally countable Borel graph on a Polish space
$X$, then there is a countable set $\langle T_i \mid i \in \N \rangle$ of
Borel automorphisms of $X$ such that $x \G y$ if and only if $x \neq y$ and there is an $i$ such
that $T_i(x) = y$. (Indeed, the $T_i$ may be chosen to be involutions, so
$T^2_i(x) = x$). 
From this, it follows that any property of vertices of a
locally finite Borel graph which only depends on the isomorphism class of
its $r$-balls is Borel. All the Borel graphs used to prove the theorems in
the introduction will come equipped with natural functions generating
their edges. A standard reference for descriptive set theory is
\cite{MR1321597}. 

A \define{paradoxical decomposition} of an action of a group $\Gamma$ on a
space $X$ is a partition of $X$ into finitely many sets $\{A_1, \ldots,
A_n, B_1, \ldots, B_m\}$ such that there exist group elements $\alpha_1,
\ldots, \alpha_n, \beta_1, \ldots, \beta_m \in \Gamma$ such that the sets
$\alpha_i A_i$ are pairwise disjoint, the sets $\beta_i B_i$ are pairwise
disjoint, and $X = \alpha_1 A_1 \union \ldots \union \alpha_n A_n = \beta_1
B_1 \union \ldots \union \beta_m B_m$. A basic reference on paradoxical
decompositions is~\cite{MR1251963}. Note that the action of a group
$\Gamma$ on a space $X$
is paradoxical if and only if there exists a finitely generated subgroup
$\Delta \leq \Gamma$ so that the restriction of the action to $\Delta$ is
paradoxical. So while we typically state our results for actions of
arbitrary groups, it generally suffices to consider actions of
finitely generated groups. 

If $a$ is an action of a group $\Gamma$ on a space $X$, then $A, B \subset
X$ are said to be \define{$a$-equidecomposable} if there is a partition
$\{A_1, \ldots, A_n\}$ of $A$ into finitely many sets and group elements
$\alpha_1, \ldots, \alpha_n \in \Gamma$ such the sets $\alpha_i A_i$ are pairwise
disjoint, and $B = \alpha_1 A_1 \union \ldots \union \alpha_n A_n$.
It is easy to see that equidecomposability is an equivalence relation. In
the language of equidecomposability, $a$ has a
paradoxical decomposition if and only if $X$ can be partitioned into two
sets $\{A,B\}$ such that $X$ is $a$-equidecomposable with both $A$ and $B$.

\section{Baire measurable matchings}

We begin with a lemma which
says roughly that modulo a meager set, the vertices of a Borel graph can
be decomposed into countably many Borel sets where pairs of distinct
vertices in each set have large pairwise distance. 

\begin{lemma} \label{comeager} 
Let $G$ be a locally finite Borel graph on a Polish space $X$ and
$f: \N \to \N$ be a function.  Then there is a sequence $\langle A_n \mid n \in \N
\rangle$ of Borel sets such that $A=
\bigcup_{n \in \N} A_n$ is comeager and $G$-invariant and 
distinct $x,y \in A_n$ have $d_G(x,y) > f(n)$.
\end{lemma}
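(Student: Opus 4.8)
The plan is to construct, on a comeager invariant Borel set, a Borel coloring of the vertices by $\N$ in which the color-$n$ class is $f(n)$-sparse, building the color classes greedily scale by scale while using a Baire category bookkeeping to force the colored region to be comeager. For the tools, note first that each $G^{\leq r}$ is again a locally finite Borel graph, since whether $d_G(x,y)\leq r$ depends only on the finite $r$-ball, which is Borel by the Feldman--Moore/Kechris--Solecki--Todorcevic remark recalled in Section~\ref{defns}. Using the countable family of Borel automorphisms generating $G$ (and hence $G^{\leq r}$), it is standard \cite{MR1667145} that each $G^{\leq r}$ has a Borel proper coloring with countably many colors; I fix, for each $r$, a partition of $X$ into countably many Borel $G^{\leq r}$-independent sets.

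The engine of the proof is the following observation that one can always make \emph{categorical} progress. Suppose $B$ is Borel and nonmeager in a nonempty open set $U$, and let $r\in\N$. Then $B$ is comeager in some nonempty open $V_0\subseteq U$; the countably many $G^{\leq r}$-independent classes cover $V_0$, so one of them is nonmeager in $V_0$ and hence comeager in some nonempty open $V\subseteq V_0$; its intersection $W$ with $B$ is then a Borel $G^{\leq r}$-independent set, $W\subseteq B$, that is comeager in $V$. The content is that although $W$ is $r$-sparse (so in the metric $d_G$ it is far from filling $V$), it is nonetheless topologically large in $V$. This is precisely the feature available for category but not for measure, and it is what will let us cover comeagerly for \emph{arbitrary} $f$: the measure analogue of the lemma is false, for instance when $f(n)=2^n$, where an $f(n)$-sparse set carries measure at most $1/(f(n)+1)$ and $\sum_n 1/(2^n+1)<1$.

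Now let $\{U_i\}$ enumerate a basis of nonempty open sets and fix a bookkeeping function $i\from\N\to\N$ hitting every value infinitely often. Recursively set $R_0=X$ and $R_{n+1}=R_n\setminus A_n$, where: if $R_n$ is nonmeager in $U_{i(n)}$, we let $A_n\subseteq R_n$ be a Borel $G^{\leq f(n)}$-independent set comeager in some nonempty open $V\subseteq U_{i(n)}$, obtained from the previous paragraph with $B=R_n$ and $r=f(n)$; otherwise we set $A_n=\emptyset$. Each $A_n$ is Borel and $f(n)$-sparse by construction. To see that $A:=\bigcup_n A_n$ is comeager, suppose the residue $R_\infty:=\bigcap_n R_n=X\setminus A$ were nonmeager; being Borel it would be comeager in some basic $U_j$. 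Pick $n$ with $i(n)=j$; then $R_n\supseteq R_\infty$ is comeager, in particular nonmeager, in $U_j$, so at stage $n$ we were in the first case and $A_n$ is comeager in some nonempty open $V\subseteq U_j$. Hence $R_{n+1}=R_n\setminus A_n$, and a fortiori $R_\infty$, is meager in $V$, whereas $R_\infty$ is comeager in $U_j$ and therefore in $V$ — a contradiction. Thus $A$ is comeager.

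Finally, to obtain $G$-invariance, observe that $X\setminus A$ is meager; replacing it by its $G$-saturation and taking the complement yields a comeager $G$-invariant Borel $A''\subseteq A$, and then $A_n':=A_n\inters A''$ are Borel, still $f(n)$-sparse (sparseness passes to subsets), with $\bigcup_n A_n'=A''$. The step I expect to require the most care is exactly this invariance reduction: the automorphisms generating $G$ are only Borel, so to know that the $G$-saturation of a meager set is meager one should first pass, by the standard topological realization of Borel actions, to a finer Polish topology with the same Borel sets in which these automorphisms are homeomorphisms, and check that this does not alter the $\sigma$-ideal of meager sets. By contrast, the categorical-progress observation and the bookkeeping argument for comeagerness are soft, so the bulk of the remaining work is in this last technical point.
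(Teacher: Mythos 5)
Your construction of the sparse sets $A_n$ with comeager union is correct, and it is essentially the paper's own bookkeeping argument (your $G^{\leq r}$-independent classes are the paper's sets $B_{i,r}$, and your residual-set contradiction replaces the paper's ``nonmeager in every basic open set, hence comeager'' step). The gap is the final invariance step, and it is genuine. The claim you rely on --- that the $G$-saturation of a meager set is meager --- is false for graphs generated by Borel automorphisms: a Borel bijection of a Polish space need not preserve category at all. (For instance, patching together Borel isomorphisms between uncountable Borel sets gives a Borel automorphism of $2^\N$ carrying a closed nowhere dense set onto a clopen set.) Moreover, the repair you propose cannot work, because the ``check'' you defer is provably false in exactly the problematic cases: if some generator $T$ maps a $\tau$-meager Borel set onto a $\tau$-nonmeager one, then \emph{no} finer Polish topology $\tau'$ making $T$ a homeomorphism can have the same meager ideal as $\tau$ (otherwise $T$ would preserve $\tau$-category, by transferring through $\tau'$). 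Worse, the two ideals can be orthogonal: if $\langle K_n \rangle$ are pairwise disjoint nowhere dense Cantor sets in $\R$ chosen so that every nonempty open interval entirely contains some $K_n$, then the topology $\tau'$ generated by $\tau \union \{K_n : n \in \N\}$ is Polish with the same Borel sets, and $\bigunion_n K_n$ is $\tau'$-dense open, hence $\tau'$-comeager, while being $\tau$-meager. So an invariant set that is comeager for $\tau'$ gives no information about $\tau$-comeagerness, which is what the lemma asserts.

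The fix is to fold invariance into the bookkeeping rather than to recover it afterwards by saturation; this is what the paper does and is the real content of its proof. Fix a countable family $\langle T_i \mid i \in \N\rangle$ of Borel automorphisms generating the \emph{connectedness} relation of $G$ and closed under composition and inverses, and run the bookkeeping over pairs $(i,j)$: at stage $n$ coding the pair $(i,j)$, choose an $f(n)$-sparse Borel class $A_n'$ (one of your independent classes $B_{k,f(n)}$) such that the \emph{image} $T_i(A_n')$ is nonmeager in $U_j$ --- possible since $X = \bigunion_k T_i(B_{k,f(n)})$. Setting $A' = \bigunion_n A_n'$, each $T_i(A')$ is a Borel set nonmeager in every basic open set, hence comeager, so $A = \bigcap_i T_i(A')$ is comeager; and $A$ is $G$-invariant purely because the family $\langle T_i \rangle$ is closed under composition and inverses. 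Note that this argument never takes the image of a meager set under any $T_i$, which is precisely how it sidesteps the failure of category-preservation that sinks your saturation step.
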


\begin{proof}
Let $\langle U_i \mid i \in \N \rangle$ enumerate a basis of open sets for
$X$. We define sets $B_{i,r}$ by setting $x \in B_{i,r}$ if and only if $x
\in U_i$ and for all $y \neq x$ in the closed $r$-ball around $x$, we have
$y \notin U_i$. These sets are Borel since we can identify the closed
$r$-ball around $x$ in a Borel way. Note that distinct $x, y \in B_{i,r}$
have $d_G(x,y) > r$. Finally, we claim that for each fixed $r$, $X =
\bigcup_{i} B_{i,r}$.  This is because for each $x \in X$, since $G$ is
locally finite, the elements of the closed $r$-ball around $x$ which are
not equal to $x$ form a finite set. Hence, $x$ can be separated from this
finite set by some $U_i$.

We are now ready to construct our sets $A_n$. Choose a countable set of
Borel automorphisms $S_i \from X \to X$ generating $G$, and let $\langle T_i \mid
i \in \N \rangle$ be the closure of the automorphisms $S_i$ under
composition and inverses. Hence, $x$ and $y$ are in the same connected component of $G$
if and only if there is an $i$ such that $T_i(x) = y$. Fix an enumeration
of $\N^2$. We will choose each $A_n$ such that $T_i(A_n)$ is nonmeager in
$U_j$, where $(i,j)$ is the $n$th pair of natural numbers. For a fixed $n$
and $i$, since $X = \bigcup_{k } B_{k,f(n)}$ and $T_i$ is an
automorphism, we have that $X = \bigcup_{k } T_i(B_{k,f(n)})$.
Thus, we can apply the Baire
Category theorem to find a $k$ such that $T_i(B_{k,f(n)})$ is nonmeager in
$U_j$. Let $A'_n$ be this $B_{k,f(n)}$ and let $A' = \bigcup_n A'_n$. Note
that $A'_n$ has the property that distinct $x,y \in A_n'$ have $d_G(x,y) >
f(n)$.
Now
for every $i$, the set $T_i(A')$ is comeager since it is nonmeager in every
open set. Thus, the intersection $A = \bigcap_i T_i(A')$ is comeager since
it is a countable intersection of comeager sets. Finally, $A$ is a
$G$-invariant set since the $T_i$ generate the connectedness relation of
$G$; if $x \notin A$, then $T_i(x) \notin A$ for all $i$. Our desired sets
are $A_n = A \inters A'_n$.
\end{proof}

We remark that we could give a more compact but less elementary proof
of the above theorem as follows.
For each $r$, fix a
Borel $\N$-coloring $c_r \from X \to \N$ of $G^{\leq r}$
by~{\cite[Proposition 4.5]{MR2095154}}.
Let $B_{i,r} =
c_r^{-1}(i)$. (Indeed, the first paragraph of the proof of
Lemma~\ref{comeager} essentially repeats the proof from~{\cite[Proposition
4.5]{MR2095154}} that the graphs $G^{\leq r}$ have Borel $\N$-colorings).
For each $x \in X$, the set of parameters $p \in \N^\N$ such
that $x \in X_p = \bigunion_{n \in \N} B_{p(n),f(n)}$ is comeager (in fact,
it is dense open). Now let
$Y_p$ be the set of $x \in X$ so that every $y$ in the connected component
of $x$ is contained in $X_p$. So the set of $p \in \N^\N$ for
which $x \in Y_p$ is also comeager. 
By the Kuratwoski-Ulam
Theorem~\cite[Theorem 8.41]{MR1321597}, there are comeagerly many $p \in
\N^\N$ for which $Y_p$ is comeager. Fixing such a $p$, we let our sets
$A_n$ be $Y_p \inters B_{p(n),f(n)}$. 

Throughout the paper, Lemma~\ref{comeager} will be the only way in
which we discard meager sets. 

We are now ready to prove Theorem~\ref{bairely_hall}. We begin by
introducing some notation. If $G$ is a graph and $M$ is a partial matching
of $G$, then $G - M$ is the graph obtained by removing the vertices
incident on edges of $M$ and any edges incident on removed vertices. We
note that if $G$ is a Borel graph and $M$ is Borel, then $G - M$ is a Borel
graph. 

If $G$ is a bipartite graph on $X$, then every finite set $F \subset X$ can be partitioned into
$G^{2}$-connected sets having disjoint sets of neighbors in $G$. 
Thus, to
show that Hall's condition holds for a graph, it suffices to show it holds
just for $G^2$-connected sets. 
We will use the following strengthening of Hall's condition where we
restrict our attention to $G^2$-connected sets of size at least $n$. 
\begin{defn} 
A bipartite graph $G$ with bipartition $\{B_0,B_1\}$ satisfies
$\Hall_{\epsilon,n}$ if $G$ satisfies Hall's condition and additionally for
every finite $G^{2}$-connected set $F$ with $\card{F} \geq n$ and $F
\subset B_0$ or $F \subset B_1$, $\card{\Nbhd_G(F)} \geq (1 +
\epsilon)\card{F}$. 
\end{defn}
If $n = 1$, by decomposing into $G^2$-connected sets with disjoint
neighborhoods, this definition is
equivalent to the simpler requirement that for all finite sets $F$ with $F
\subset B_0$ or $F \subset B_1$, $\card{\Nbhd_G(F)} \geq (1 + \epsilon)
\card{F}$. This is the hypothesis of Theorem~\ref{bairely_hall}.
However, for $n > 1$ it becomes important the we only consider
$G^2$-connected sets in our proof.

\begin{proof}[Proof of Theorem~\ref{bairely_hall}.]
Let $G$ be as in the theorem with $\epsilon$ witnessing that $G$ satisfies
$\Hall_{\epsilon,1}$. Note we do not require that $B_0$ and $B_1$ are
Borel.  Let $f:\N\to \N$ be an increasing function such
that for all $n$, $f(n) \geq 8$ and $\sum_{n \in \N} 8/f(n) < \epsilon$.
Apply Lemma
\ref{comeager} with the function $f$ to obtain an invariant comeager $A = \bigcup_{n \in \N}
A_n$, which is a union of Borel sets $A_n$ of elements of pairwise distance greater
than $f(n)$. We will find a Borel perfect
matching of $G \restriction A$. Indeed, our argument will show that any
bipartite Borel graph satisfying $\Hall_{\epsilon,1}$ has a Borel
perfect matching provided the vertex set of the graph can be written as a
union of Borel sets $A_n$ of pairwise distance greater than $f(n)$, for $f$
sufficiently large as above. 

We construct by induction an increasing sequence of Borel partial matchings
$M_n$ for $n \in \N$ such that the set of vertices incident to edges in
$M_n$ includes $\bigcup_{m \leq n} A_m$. We will also ensure that $G - M_n$
satisfies $\Hall_{\epsilon_n,f(n)}$, where $\epsilon_n = \epsilon - \sum_{i
\leq n} 8/f(i)$.

For ease of notation we let $M_{-1} = \emptyset$ and $\epsilon_{-1} =
\epsilon$. Note that the hypotheses of the theorem gives us
$\Hall_{\epsilon_{-1},1}$.  
Suppose that we have constructed $M_{n-1}$ as above. We
will now construct $M_{n}$. Let $X_{n-1} \subset X$ be the vertex set of $G
- M_{n-1}$. For each $x \in A_n \inters X_{n-1}$, we can find an edge $e$
incident to $x$ such that $(G - M_{n-1}) - \{e\}$ satisfies Hall's condition.
This is because $G - M_{n-1}$ satisfies Hall's condition, and any edge $e$
from a perfect matching of $G - M_{n-1}$ has this property.
Let $M_{n}'$ be a Borel set containing one such edge $e$ for each
$x \in A_n \inters X_{n-1}$ and let $M_{n} = M_{n-1} \cup M_{n}'$. For
instance, let $\langle T_i \mid i \in \N \rangle$ be a Borel set of
automorphisms generating $G$, and for each $x \in A_n \inters X_{n-1}$ put the edge
$\{x, T_i(x)\} \in M_{n}'$ if $i$ is least such that $T_i(x) \neq x$ and $(G - M_{n-1}) -
\{\{x,T_i(x)\}\}$ satisfies Hall's condition.

To finish the proof we will show $G - M_{n}$ satisfies
$\Hall_{\epsilon_{n},f(n)}$ assuming $G-M_{n-1}$
satisfies $\Hall_{\epsilon_{n-1},f(n-1)}$.
Let $F$ be a finite $(G-M_{n})^{2}$-connected
subset of $B_0 \inters X_{n}$ or $B_1 \inters X_{n}$, where $X_{n} \subset
X$ is the vertex set of $G - M_{n}$. Our rough idea is that if $F$ is large, since $A_n$ is sparse, we lose only a tiny fraction of the neighbors of $F$ in
passing from $G-M_{n-1}$ to
$G-M_n$. If $F$ is small, we use the fact that the
removal of any single edge in $M_n$ from $G-M_{n-1}$ preserves 
Hall's condition. 

Let $D = \Nbhd_{G-M_{n-1}}(F) - \Nbhd_{G-M_{n}}(F)$. Suppose first that
$\card{D} \geq 2$. Each $x \in D$ must be a neighbor of some element $y_x$
of $F$. Each $x \in D$ must also be an element of $A_n$ or a neighbor of an
element of $A_n$ (which it is matched to in $M_n$). Fix distinct $x, x' \in
D$. Then $x$ and $x'$ are associated to distinct elements of $A_n$ since $M_n$ is a
matching and $F$ is a subset of one half of the bipartition of $G$.
Further, since elements of $A_n$
have pairwise distance greater than $f(n)$, we have $d_G(y_{x},y_{x'}) > f(n) - 4$. Since $F$ is connected in $(G -
M_{n})^{2}$, there must be at least $\lfloor (f(n) - 4)/4 \rfloor \geq f(n)
/ 8$ elements of $F$ of distance at most $(f(n) - 4)/2$ from each $y_x$.
Since the closed $(f(n) - 4)/2$-balls around each $y_x$ are disjoint, it
follows that $\card{D} \leq (8/f(n))|F|$ and so
\begin{equation*}
\begin{split}
\card{\Nbhd_{G-M_{n}}(F)} & = \card{\Nbhd_{G-M_{n-1}}(F)} - \card{D}\\
& \geq (1 + \epsilon_{n-1})\card{F} - (8/f(n)) \card{F}\\
& \geq (1 + \epsilon_{n})\card{F}.
\end{split}
\end{equation*}

Suppose now that $\Nbhd_{G-M_{n-1}}(F)$ contains at most one vertex that is
not an element of $\Nbhd_{G-M_{n}}(F)$. First we establish
$\card{\Nbhd_{G-M_{n}}(F)} \geq \card{F}$. Observe that either for some $e
\in M_n'$, we have $\Nbhd_{G-M_{n}}(F) = \Nbhd_{G-M_{n-1}-\{e\}}(F)$ or we
have $\Nbhd_{G-M_{n}}(F)=\Nbhd_{G-M_{n-1}}(F)$. In the first case we have
Hall's condition from the choice of $M_{n}'$ and in the second we have it
by the induction hypothesis. Now if additionally $\card{F} \geq f(n)$, then
since
$\card{\Nbhd_{G-M_{n-1}}(F)} - \card{\Nbhd_{G-M_{n}}(F)} \leq 1$, we have
$\card{\Nbhd_{G-M_{n}}(F)} \geq \card{\Nbhd_{G-M_{n-1}}(F)} -
(1/f(n))\card{F}$,
and we can conclude $\card{\Nbhd_{G-M_{n}}(F)} \geq (1 +
\epsilon_{n})\card{F}$ as above.
\end{proof}

We remark here that an analogue of Theorem~\ref{bairely_hall} for one-sided
matchings is also true. Suppose $G$ is a Borel graph with Borel bipartition
$\{B_0, B_1\}$ satisfying a one-sided version of $\Hall_{\epsilon,1}$. Then
one can use the same type of argument as Theorem~\ref{bairely_hall},
inductively constructing one-sided matchings $M_n$ and verifying that $G -
M_n$ satisfies the one-sided $\Hall_{\epsilon_{n},f(n)}$ condition. 

\section{Paradoxical decompositions}
\label{sec:paradox}

We now exploit a well-known (see for instance~\cite{MR1721355}) connection between matchings and paradoxical
decompositions to prove Theorem~\ref{baire_paradox}. This connection has
been previously used in the setting of descriptive graph combinatorics
in~\cite{GMP} and~\cite{KM}. 

\begin{defn}
Suppose $a$ is an action of a group $\Gamma$ on a space $X$ and $S \subset
\Gamma$ is a finite symmetric set. Let $\Gpdx(a,S)$ be the bipartite
graph with vertex set $\{0,1,2\} \times X$ where there is an edge from
$(i,x)$ to $(j,y)$ if exactly one of $i$ and $j$ is equal to $0$, and there
is a $\gamma \in S$ such that $\gamma \cdot x = y$. 
\end{defn}

Now $\Gpdx(a,S)$
has a perfect matching $M$ if and only if $a$ has a paradoxical
decomposition using group elements from $S$. To see this, suppose $S =
\{\gamma_1, \ldots, \gamma_n\}$ and $M$ is a perfect matching of
$\Gpdx(a,S)$. Now put $x \in A_i$ if $(0,x)$
is matched to $(1,\gamma_i \cdot x)$ and $\gamma_j \cdot x \neq \gamma_i
\cdot x$ for all $j < i$. Similarly, put 
$x \in B_i$ if $(0,x)$ is matched to
$(2,\gamma_i \cdot x)$ and $\gamma_j \cdot x \neq \gamma_i \cdot x$ for all
$j < i$. Then we see $\{A_1, \ldots, A_n,
B_1, \ldots, B_n\}$ partitions the space, as does $\gamma_i A_i$ and also
$\gamma_i B_i$. The converse is proved via the same identification.

\begin{proof}[Proof of Theorem~\ref{baire_paradox}.]

Assume $\Gamma$ acts by Borel automorphisms on a Polish space
$X$ and the action has some paradoxical decomposition using group elements from a finite
symmetric set $S$, so $\Gpdx(a,S)$ satisfies Hall's condition. Now by
enlarging $S$ to $S^2 = \{\gamma \delta : \gamma, \delta \in S\}$, we claim
that $\Gpdx(a,S^2)$ satisfies $\Hall_{1,1}$. This is trivial for
finite sets of the form $F = \{0\} \times F'$ since $\Nbhd_{\Gpdx(a,S)}(\{0\}
\times F') = \{1,2\}
\times F''$ for some $F''$ where $\card{F''} \geq \card{F'}$.

To finish, it suffices to check sets of the form $F = \{1,2\} \times F'$,
since 
\[\Nbhd_{\Gpdx(a,S)}((\{1\}\times F_1) \union (\{2\} \times F_2)) =
\Nbhd_{\Gpdx(a,s)}(\{1,2\} \times (F_1 \union F_2))\] 
and $\card{(\{1\}\times
F_1) \union (\{2\} \times F_2)} \leq \card{\{0,1\} \times (F_1 \union
F_2)}$.
So consider $F = \{1,2\} \times F'$. If we let $\{0\} \times F''
 = \Nbhd_{\Gpdx(a,S)}(\{1,2\} \times F')$, then $\card{F''} \geq
\card{F}$, so
\begin{equation*}
\begin{split}
\card{\Nbhd_{\Gpdx(a,S^2)}(F)} & = \card{\Nbhd_{\Gpdx(a,S^2)}(\{1,2\}
\times F')} \\
& \geq \card{\Nbhd_{\Gpdx(a,S)}(\{1,2\} \times F'')} \\
& \geq \card{\{1,2\} \times F''} \\
& \geq 2 \card{F}.
\end{split}
\end{equation*}

Thus, by Theorem~\ref{bairely_hall}, since $\Gpdx(a,S^2)$ is a bipartite
Borel graph satisfying $\Hall_{1,1}$, it has a Borel perfect
matching on a $\Gpdx(a,S^2)$-invariant comeager Borel set $A$.  
We can use the axiom of choice to
extend to a perfect matching $M$ defined on the whole graph $\Gpdx(a,S^2)$.
Now the paradoxical decomposition we defined above associated to this
matching will have pieces with the Baire property. This is because the set
of $x \in X$ such that $(0,x) \in A$ is Borel and comeager, and
hence the pieces of the paradoxical decomposition are relatively Borel in a
comeager Borel set.
\end{proof}

Note that we needed to increase the number of pieces in our paradoxical
decomposition in the proof above to obtain a paradoxical decomposition
using pieces with the Baire property; this occurs when we enlarge the set
$S$ of group elements to $S^2$. This is known to be necessary in general 
by a result of Wehrung~\cite{MR1209101}. In particular,
Wehrung has shown that a Baire measurable paradoxical decomposition of the $2$-sphere
by isometries must use at least six pieces, while there are
paradoxical decompositions using pieces without the Baire property with
only four pieces.

\section{Equidecomposability} \label{equidec}

Suppose $a$ is an action of a group $\Gamma$ on a space $X$ and $A$ and $B$
are subsets of $X$. Then for any finite symmetric $S \subset \Gamma$ we may
form a bipartite graph $G(a,S,A,B)$ on the space $\{0\} \times A \union
\{1\} \times B$ where $(i,x) \mathrel{G(a,S,A,B)} (j,y)$ if $i \neq j$ and there is
an $\gamma \in S$ such that $\gamma \cdot x = y$. It is obvious that $A$
and $B$ are $a$-equidecomposable if and only if there is some finite
symmetric $S \subset \Gamma$ such that $G(a,S,A,B)$ has a perfect matching.
From this perspective, the graph $\Gpdx(a,S)$ used in Section~\ref{sec:paradox} is really the
graph $G(b,S,\{0\} \times X, \{1,2\} \times X)$ where $b$ is the action of 
$\Z/3\Z \times \Gamma$ on $\{0,1,2\} \times
X$ via $(n,\gamma) \cdot_b (i,x) = (n+i, \gamma \cdot_a x)$. Of
course,
$a$ has a paradoxical decomposition if and only if $\{0\} \times X$ is
$b$-equidecomposable with $\{1,2\} \times X$. 

We begin by proving a lemma relating matchings in $2$-regular acyclic Borel
bipartite graphs with certain graphs containing them. If $G$ is a
$2$-regular acyclic graph on $X$ and $n \geq 1$, let $G_n$ be the
Borel graph on $X$ where $x \mathrel{G_n} y$ if there is a $G$-path of odd
length at most $2n-1$ from $x$ to $y$. Note that $G_1 = G$, and $G_{n}$ is
a $2n$-regular graph. 

\begin{lemma}\label{2-regular_matching_lemma}
  Suppose $G$ is a $2$-regular acyclic Borel bipartite graph with Borel
  bipartition $\{B_0, B_1\}$ and $n \geq 1$. Then if $G_n$ has a Borel perfect matching on a $G_n$-invariant Borel set $Y$, then $G$ has a
Borel perfect matching on $Y$. 
\end{lemma}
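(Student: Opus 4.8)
The plan is to use the given $G_n$-matching $M$ to break the symmetry between the two perfect matchings available on each component of $G$. First I would record the structure of $G$: since $G$ is $2$-regular, acyclic, and (as a simple graph) can have no finite component, every connected component of $G$ is a bi-infinite path, i.e.\ $G$-isomorphic to $\Z$ with consecutive integers adjacent. Since $G = G_1 \subseteq G_n$ and every $G_n$-edge joins two vertices lying in a common $G$-component, the $G_n$-components coincide with the $G$-components; in particular $Y$ is also $G$-invariant, and it suffices to produce, uniformly and in a Borel way, a perfect matching of $G$ on each line contained in $Y$.

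The key device is a crossing count. For each $G$-edge $e$, deleting $e$ splits its line into two half-lines, and I define $c(e)$ to be the number of edges $\{x,y\} \in M$ that cross $e$, meaning that $e$ lies on the unique $G$-path from $x$ to $y$. Because $M \subseteq G_n$, every edge of $M$ has $G$-length at most $2n-1$, so only $M$-edges with both endpoints in the closed $(2n-1)$-ball around $e$ can cross $e$; hence $c(e)$ is a finite quantity computable from $M$ restricted to a bounded ball, and $e \mapsto c(e)$ is Borel. I then set
\[ M' = \{\, e : e \text{ is a } G\text{-edge and } c(e) \text{ is odd}\,\}, \]
which is Borel, and claim it is a perfect matching of $G$ on $Y$.

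The heart of the argument is a parity computation along each line. Writing a line as $\ldots, v_{-1}, v_0, v_1, \ldots$ and $e_i = \{v_i, v_{i+1}\}$, I would compare $c(e_i)$ with $c(e_{i+1})$. Every $M$-edge not incident to $v_{i+1}$ crosses $e_i$ if and only if it crosses $e_{i+1}$, whereas the unique $M$-edge incident to $v_{i+1}$ crosses exactly one of $e_i, e_{i+1}$, according to whether its partner lies to the left or to the right of $v_{i+1}$. Hence $c(e_{i+1}) - c(e_i) = \pm 1$, so the parity of $c(e_i)$ alternates with $i$. Consequently $\{\, i : c(e_i) \text{ odd}\,\}$ is exactly one congruence class modulo $2$, so $M'$ selects every other edge along the line, which is precisely one of its two perfect matchings. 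Restricting to $Y$ yields the desired Borel perfect matching of $G$.

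The main obstacle is verifying the parity-flip identity $c(e_{i+1}) - c(e_i) = \pm 1$ cleanly, together with the finiteness and local (hence Borel) computability of $c(e)$; the length bound $2n-1$ coming from $M \subseteq G_n$ is exactly what makes both of these work. I expect the bipartiteness and bounded-length hypotheses to enter only to ensure that $G_n$ is well defined with the same bipartition and that $c(e)$ is finite — the combinatorial core is simply that a perfect matching of $G_n$ induces, through its crossing parities, a canonical alternating selection of the edges of $G$.
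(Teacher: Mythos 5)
Your proof is correct, and it takes a genuinely different route from the paper's. The paper works vertex-by-vertex: viewing $M$ as a Borel function from $B_0 \inters Y$ to $B_1 \inters Y$, it defines $M'(x)$ for $x \in B_0$ by a \emph{majority vote} --- among the $2n-1$ vertices of $B_0$ within $G$-distance $2n-2$ of $x$, at least $n$ of their $M$-images lie on one side of $x$, and $M'(x)$ is the $G$-neighbor of $x$ on that side. The paper must then separately verify that this choice of direction is consistent across each component, which it does by a bijection/counting argument, first for vertices $x,y$ whose windows $D_n(x), D_n(y)$ are disjoint and then in general by interpolating a third vertex. Your argument instead works edge-by-edge via the crossing parity $c(e)$, and the consistency comes for free: the parity-flip identity $c(e_{i+1})-c(e_i)=\pm 1$ is a purely local computation at the single vertex $v_{i+1}$ (the unique $M$-edge at $v_{i+1}$ crosses exactly one of $e_i, e_{i+1}$, and all other $M$-edges cross both or neither), and alternation of parity along the line immediately forces the odd-parity edges to be one of the two perfect matchings of each line. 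Both proofs get Borelness from the same source --- $M'$ is determined by $M$ restricted to a bounded ball, and such local computations are Borel for locally finite Borel graphs --- but your version is somewhat more robust: it never uses the bipartition or the specific count $2n-1$ versus $n$, and it applies verbatim to any Borel perfect matching of any Borel graph on the same vertex set whose edges have uniformly bounded $G$-length, with $G_n$ being just the special case at hand.
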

\begin{proof}
Since
$G$ is $2$-regular and acyclic, using the axiom of choice, we may linearly
order each connected component of $G$ so that each vertex $x$ has
one neighbor greater than $x$ and one neighbor less than $x$. Note that a
perfect matching of $G$ is characterized by the property that for each
connected component $C$ of $G$, either every $x \in C \inters B_0 $ is
matched to its unique neighbor less than $x$, or every $x \in C \inters
B_0$ is matched to its unique neighbor greater than $x$. The idea of the
proof is that from a matching in $G_n$ we can select this unique direction
in $G$ by ``averaging'' the direction of matched edges in $G_n$ in a ball
around each point. 

Suppose that $G_n \restriction Y$ has a Borel perfect matching given by the Borel
function $M \from B_0 \inters Y \to B_1 \inters Y$. We will use $M$ to
define a Borel perfect
matching $M'$ of $G \restriction Y$. If $x \in B_0$, let $D_{n}(x) = \{y \in B_0 :
d_G(x,y) \leq 2n - 2\}$. 
Note that $\card{D_{n}(x)} = 2n-1$. Define $M' \from B_0 \inters Y \to B_1
\inters Y$ by
setting $M'(x)$ to be the unique $G$-neighbor $y$ of 
$x$ such that the connected component of $y$ in $G \restriction X \setminus
\{x\}$ contains at least $n$ elements of $M(D_n(x))$. In terms of our order,
$M'(x)$ will be the neighbor of $x$ that is less than $x$ 
if and only if $M(D_n(x))$ contains at least $n$ elements less than $x$. 
We now claim that for every
$x, y \in B_0$ in the same connected component of $G$, $M'(x) < x$ if and
only if $M'(y) < y$. Hence, $M'$ is a   
perfect matching of $G$.
Our claim is easy to see when $D_{n}(x)$ and $D_{n}(y)$ are disjoint since
if $z \in B_0$ is in the same connected component as $x$ and $y$ but $z \notin
D_n(x) \union D_n(y)$, then $x < M(z) < y$ if and only if $x < z < y$. Our
claim then follows from the fact that $M$ is a bijection. 
To finish, note that when $D_n(x)$ and $D_n(y)$ are not disjoint, then we
can find some $y'$ where $D_n(y')$ is disjoint from both $D_n(x)$ and
$D_n(y)$. 
\end{proof}

We now prove a result which shows we cannot generalize
Theorem~\ref{baire_paradox} to say that equidecomposable Borel sets have
Baire measurable equidecompositions. 

\begin{thm}
  There is an action of a group $\Gamma$ on a Polish space $X$ by Borel
  automorphisms and
  Borel sets $B_0, B_1 \subset X$ that are equidecomposable, but not
  equidecomposable using pieces with the Baire property.
\end{thm}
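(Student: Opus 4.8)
The plan is to build the example on top of Laczkovich's pathological matching graph, using Lemma~\ref{2-regular_matching_lemma} to rule out equidecompositions that use larger and larger sets of group elements. Fix, in its generically ergodic (continuous) form alluded to in the introduction, a Polish space $X = B_0 \sqcup B_1$ with $B_0, B_1$ clopen, together with homeomorphisms $\phi, \psi \from B_0 \to B_1$ so that the $2$-regular bipartite Borel graph $G$ on $X$ joining each $x \in B_0$ to $\phi(x)$ and $\psi(x)$ is acyclic (equivalently $\psi^{-1}\phi$ is an aperiodic homeomorphism of $B_0$) and admits no Borel perfect matching on any comeager set \cite{MR947676}. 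Extend $\phi$ and $\psi$ to the involutions $\Phi, \Psi \from X \to X$ which swap $B_0$ and $B_1$ (so $\Phi\restriction B_0 = \phi$, $\Phi \restriction B_1 = \phi^{-1}$, and similarly for $\Psi$), and let $a$ be the resulting action of $\Gamma = \langle \Phi, \Psi \rangle$ on $X$. Then $\Gamma$ is a countable group acting by \emph{homeomorphisms}, the $\Gamma$-orbits are exactly the connected components of $G$, and $G = G(a, \{\Phi,\Psi\}, B_0, B_1)$.

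Since $G$ is $2$-regular and bipartite it satisfies Hall's condition, so by Hall's theorem (via the axiom of choice) it has a perfect matching; by the correspondence of Section~\ref{equidec} this matching witnesses that $B_0$ and $B_1$ are $a$-equidecomposable. It remains to show there is no equidecomposition using pieces with the Baire property.

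Suppose toward a contradiction that such an equidecomposition exists, given by a partition of $B_0$ into finitely many sets $A_i$ with the Baire property and group elements $\alpha_i \in \Gamma$ with $\{\alpha_i A_i\}$ partitioning $B_1$. Setting $m(x) = \alpha_i \cdot x$ for $x \in A_i$ defines a Baire measurable bijection $m \from B_0 \to B_1$, and its extension $\tilde m$ to the involution of $X$ with $\tilde m \restriction B_1 = m^{-1}$ is a Baire measurable perfect matching of $G(a, S, B_0, B_1)$ for $S = \{\alpha_i, \alpha_i^{-1}\}$. Each $\alpha_i$ is a word of length at most some $\ell$ in $\Phi, \Psi$, so every pair matched by $\tilde m$ has odd $G$-distance at most $\ell$; hence, fixing any $n$ with $2n - 1 \geq \ell$, the matching $\tilde m$ is in fact a Baire measurable perfect matching of the graph $G_n$ (whose vertex set is all of $X$). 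Now choose a comeager Borel set $C$ on which $\tilde m$ is Borel. Because $\Gamma$ is countable and acts by homeomorphisms, each $\gamma(C)$ is comeager, so $Y = \bigcap_{\gamma \in \Gamma} \gamma(C)$ is a comeager, $\Gamma$-invariant, Borel set; since $\Gamma$-orbits are the components of $G$, the set $Y$ is $G_n$-invariant, and $\tilde m \restriction Y$ is a Borel perfect matching of $G_n \restriction Y$. By Lemma~\ref{2-regular_matching_lemma}, $G \restriction Y$ has a Borel perfect matching, contradicting the choice of $G$ because $Y$ is comeager.

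The step I expect to be the main obstacle is the choice of the underlying example in the first paragraph: it is essential that $G$ be generated by \emph{homeomorphisms} rather than merely Borel automorphisms, since a general Borel automorphism can carry a comeager set to a meager one, and category-preservation is exactly what makes the invariantization $Y = \bigcap_\gamma \gamma(C)$ comeager. This forces us to use the continuous, generically ergodic form of Laczkovich's graph (so that the meager ideal relevant to the Baire property is the correct one) rather than an arbitrary Borel presentation. The remaining points — that an arbitrary finite symmetric $S \subset \Gamma$ produces only edges lying inside a single $G_n$, and that a perfect matching of $G(a,S,B_0,B_1)$ is literally a perfect matching of that $G_n$ — are routine, and are precisely what allow Lemma~\ref{2-regular_matching_lemma}, stated for the fixed auxiliary graphs $G_n$, to apply no matter which group elements a hypothetical Baire measurable equidecomposition might use.
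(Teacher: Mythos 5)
Your proposal is correct and follows essentially the same route as the paper: build the action from (homeomorphism) involutions generating Laczkovich's graph, get equidecomposability from a choice-based perfect matching, and refute any Baire measurable equidecomposition by invariantizing a comeager Borel set on which the induced matching of $G_n$ is Borel and then invoking Lemma~\ref{2-regular_matching_lemma}. The obstacle you flag --- that the generators must be homeomorphisms so that images of comeager sets stay comeager --- is precisely the point the paper's proof also relies on ("since $G$ is generated by homeomorphisms and the image of a meager set under a homeomorphism is meager").
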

\begin{proof}
Let $G$ be the graph of Laczkovich~\cite{MR947676}, which is a $2$-regular
acyclic bipartite Borel graph on a Polish space $X$ with a bipartition into
Borel sets $\{B_0, B_1\}$ so that $G$ has no Borel perfect matching
restricted to any comeager Borel set. For this graph, we may find finitely
many Borel involutions $\langle T_i \mid i < n \rangle$ generating the
edges of $G$. Such $T_i$ are easy to define explicitly for Laczkovich's
graph, and more generally, such involutions always exist for any bounded
degree Borel graph (see the remarks after \cite[Proposition
4.10]{MR1667145}). These $T_i$ define an action $a$ of the group $\F_n$ on
$X$ by Borel automorphisms. $G$ has a perfect matching since it is
$2$-regular and acyclic. Hence, $B_0$ and $B_1$ are $a$-equidecomposable.

Suppose $S$ is a finite set of
group elements in $\F_n$ of word length at most $2m-1$, and $B_0$ and $B_1$
are $a$-equidecomposable using pieces with the Baire property and group
elements from $S$. Then the graph $G(a,S,B_0,B_1)$ defined above must have a perfect
matching $M$ which is Borel on a $G$-invariant comeager Borel set. 
This is because every set with the Baire property differs from an open
set by a meager set, and every comeager set contains a Borel comeager (indeed,
$G_\delta$) set. Finally, every comeager Borel subset of $X$ contains a comeager
$G$-invariant Borel set since $G$ is generated by homeomorphisms and the
image of a meager set under a homeomorphism is meager. Since 
$G(a,S,B_0,B_1)$ is a subset of $G_m$, this would mean $G_m$ has a Borel
perfect matching on an comeager invariant Borel set. But by 
Lemma~\ref{2-regular_matching_lemma}, this would mean $G$ has a Borel
perfect matching on a comeager invariant Borel set which is a contradiction.
\end{proof}

Laczkovich has solved Tarski's circle squaring problem, showing that a
circle and square of the same area in the plane are equidecomposable by
rigid motions~\cite{MR1037431}. It is an open problem whether the pieces
used in such a decomposition may have the Baire property.\footnote{This
question has now been resolved by in the affirmative by Grabowski,
M\'ath\'e, and Pikhurko~\cite{GMP2}.}

\section{A Baire category solution to the dynamical von Neumann-Day problem}

Suppose that $a$ is an action of a group
$\Gamma$ on a space $X$. Say that a function $f \from X \to X$ is
\define{$a$-Lipschitz} if there is a finite set of group elements $S
\subset \Gamma$ such that for every $x \in X$ there is a $\gamma \in S$
such that $f(x) = \gamma \cdot x$. Say a graph $G$ on $X$ is
\define{$a$-Lipschitz} if there is a finite set of group elements $S
\subset \Gamma$ such that for every edge $\{x,y\} \in G$ there is a
$\gamma \in S$ such that $y = \gamma \cdot x$.

We now prove a lemma which closely parallels Whyte's geometric solution to
the von Neumann-Day problem.

\begin{lemma}\label{4-regular tree}
  Suppose that $a$ is a nonamenable action of a group $\Gamma$ on a Polish
  space $X$ by Borel
  automorphisms. Then there exists a $4$-regular acyclic
  $a$-Lipschitz graph $G$ on $X$ 
  and a comeager $G$-invariant Borel set $A \subset X$ so that $G \restriction A$ is Borel. 
\end{lemma}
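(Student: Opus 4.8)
The plan is to build $G$ as the undirected ``parent graph'' of a Borel, $a$-Lipschitz, $3$-to-$1$ surjection $p \from A \to A$: a $4$-regular tree is exactly the Cayley graph of $\F_2$, and it will arise because each vertex then carries one edge to its parent and three edges to its children. Producing $p$ is a matching problem to which Theorem~\ref{bairely_hall} applies, and essentially all the difficulty will be concentrated in forcing acyclicity.

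First I would reduce to a finitely generated $\Gamma$ and fix a finite symmetric $S \subset \Gamma$. Nonamenability of the action gives, in its combinatorial form (failure of the F\o lner condition), a finite symmetric $S$ and an $\epsilon > 0$ with $\card{S \cdot F} \geq (1 + \epsilon)\card{F}$ for every finite $F \subset X$. Applying this bound to the sets $S^{k-1} \cdot F$ and telescoping yields $\card{S^k \cdot F} \geq (1+\epsilon)^k \card{F}$, so after replacing $S$ by $S^k$ for $k$ large I may assume $\card{S \cdot F} \geq (3 + \delta)\card{F}$ for all finite $F$ and some $\delta > 0$; that is, the action expands by a factor exceeding three.

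Next I would set up the matching. Consider the bipartite Borel graph $H$ with bipartition $\{B_0, B_1\}$, where $B_1 = X$ records that each point is assigned exactly one parent and $B_0 = X \times \{1,2,3\}$ provides each point with three ``child slots,'' joining $y \in B_1$ to $(x,i) \in B_0$ exactly when $x \in S \cdot y$. For finite $F \subset B_1$ one has $\card{\Nbhd_H(F)} = 3\card{S \cdot F} \geq \card{F}$, while for finite $F \subset B_0$, writing $\pi$ for the projection to $X$, one has $\card{\Nbhd_H(F)} = \card{S \cdot \pi(F)} \geq (3+\delta)\card{\pi(F)} \geq (1 + \delta/3)\card{F}$; so $H$ satisfies $\Hall_{\delta',1}$ for a suitable $\delta' > 0$. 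By Theorem~\ref{bairely_hall}, $H$ has a Borel perfect matching on a comeager invariant Borel set, which I would transfer to a comeager $G$-invariant Borel set $A \subset X$. This matching assigns to each $x \in A$ exactly three children and to each $y \in A$ exactly one parent, that is, a Borel, $a$-Lipschitz, $3$-to-$1$ surjection $p \from A \to A$. Defining $G$ by joining $y$ to $p(y)$ (and completing $G$ off $A$ by the axiom of choice) makes $G$ an $a$-Lipschitz graph, Borel on $A$, in which every vertex has one parent edge and three child edges.

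The main obstacle is acyclicity. Because every vertex of the parent graph has a unique outgoing (parent) edge, any undirected cycle must be a directed cycle following $p$; hence $G$ is acyclic \emph{if and only if} $p$ has no periodic points, and this same condition also rules out the degeneracies (loops, a child coinciding with the parent, repeated children) that would spoil exact $4$-regularity. I therefore expect the real content to be arranging that $p$ has no periodic points on a comeager invariant set---the step parallel to Whyte's passage from a bounded-geometry expander to a genuine tree. Since Theorem~\ref{bairely_hall} used as a black box gives no control over periodicity, I would instead reprove the matching with acyclicity built in: running the inductive construction behind Theorem~\ref{bairely_hall}, I would only ever assign to a vertex a parent that is not already one of its descendants, and show that the factor-three expansion leaves enough slack to maintain both Hall's condition and this no-cycle constraint throughout the induction. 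Verifying that acyclicity survives the inductive matching while the strengthened Hall condition is preserved is the crux of the argument.
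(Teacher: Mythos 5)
Your reduction and matching setup are sound and essentially the same as the paper's: your bipartite graph $H$ of parents versus child slots is, up to relabeling, the paper's graph $\Gpdx'(a,S^2)$ on $\{0,1,2,3\} \times X$, your amplification of expansion by powering $S$ is the paper's passage from Hall's condition for $\Gpdx(a,S)$ to $\Hall_{1,1}$ for $\Gpdx(a,S^2)$, and both arguments then invoke Theorem~\ref{bairely_hall} to get a Borel perfect matching on a comeager invariant set; your observation that cycles in the parent graph are exactly the $p$-periodic orbits is also correct. The gap is exactly where you say it is, and it is genuine: the plan to ``reprove the matching with acyclicity built in'' is not carried out, and the local constraint you propose (never assign as parent a vertex that is already a descendant) can collide with Hall's condition. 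The invariant maintained in the proof of Theorem~\ref{bairely_hall} is $\Hall_{\epsilon_n,f(n)}$, which gives expansion only for $G^2$-connected sets of size at least $f(n)$; small sets may be Hall-tight, so at stage $n$ an unmatched vertex $y$ may find that its only Hall-preserving partners are slots belonging to descendants of $y$, in which case your constraint cannot be satisfied. You would need a stronger inductive invariant ruling this scenario out, and you do not supply one. There is also a second issue you do not address: within a single stage infinitely many edges are added simultaneously in a Borel fashion, and two individually harmless choices can jointly close a cycle, so one must additionally compare the growth rate of the finite parent-chains built so far with the sparseness parameters $f(n)$ coming from Lemma~\ref{comeager}.

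The paper avoids all of this by not touching the matching construction at all. It takes the matching produced by Theorem~\ref{bairely_hall} as given, encodes it as three injections $f_0,f_1,f_2 \from X \to X$ with disjoint ranges partitioning $X$ (your $p$ read backwards), and observes that, since every vertex has exactly one incoming edge, each connected component of the resulting graph $H$ contains at most one cycle. It then kills these cycles by an explicit post-hoc Borel surgery: after relabeling the functions so that every cycle uses only $f_0$-edges, it reroutes edges along the $f_1$-spines emanating from cycle vertices (replacing $f_0$ by $f_0 \circ f_1$ on those spines), producing a $4$-regular acyclic graph with the same connected components in which adjacent vertices are within distance $2$ in $H$, so the new graph is still $a$-Lipschitz and still Borel on the comeager invariant set. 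This local surgery, in the spirit of Whyte's construction, is the idea your proposal is missing; without it, or a genuinely new argument completing your modified induction, the proof is incomplete.
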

\begin{proof}
  Since the action of $a$ is nonamenable and hence paradoxical, we may find
  a finite symmetric set $S \subset \Gamma$ so
  that $\Gpdx(a,S)$ satisfies Hall's condition and $\Gpdx(a,S^2)$ satisfies
  $\Hall_{1,1}$ as in the proof of Theorem~\ref{baire_paradox}. Let $\Gpdx'(a,S^2)$ be the graph on
  $\{0,1,2,3\} \times X$ where there is an edge between $(i,x)$ and $(j,y)$
  if exactly one of $i$ and $j$ is equal to $0$ and there is a $\gamma \in
  S^2$ such that $\gamma \cdot x = y$. Since $\Gpdx(a,S^2)$ satisfies
  $\Hall_{1,1}$, the new graph $\Gpdx'(a,S^2)$ satisfies $\Hall_{1/3,1}$.
  Thus, by Theorem~\ref{bairely_hall}, we may find
  a perfect matching of $\Gpdx'(a,S^2)$ which is Borel on a comeager
  invariant Borel set $A'$. Now define functions $f_0, f_1, f_2 \from X \to
  X$ by setting $f_i(x)$ to be the unique $y$ such that $(i+1,x)$ is
  matched to $(0,y)$. These functions are $a$-Lipschitz, injective, and
  have disjoint ranges which partition $X$. Let $A = \{x :
  (0,x) \in A'\}$ which is comeager and Borel. It is invariant under $f_0$,
  $f_1$, and $f_2$ since $A'$ is $\Gpdx'(a,S^2)$-invariant and $S^2$
  contains the identity and so $(0,x) \in A'$ if and only if $(n,x) \in A'$
  for every $n$. Note that the functions $f_i$ are Borel when restricted to
  $A$. We will define a $4$-regular tree in terms of these functions.

  Consider the graph $H$ generated by the three functions $f_0$, $f_1$ and
  $f_2$, so $x \mathrel{H} y$ if there is an $i$ such that $f_i(x) = y$ or
  $f_i(y) = x$. Each connected component of $H$ contains at most one cycle.
  To see this, observe that since the $f_i$ have disjoint ranges and
  generate $H$, any cycle must arise from some $x$ and $i_0, \ldots, i_n$
  where $f_{i_n} \circ \ldots \circ f_{i_0}(x) = x$. Further, for such an
  $x$, every other element in the same connected component is in the
  forward orbit of $x$ under the $f_i$. Hence, another cycle of this form
  would contradict the fact that the $f_i$ are injective and have disjoint
  ranges.

  We may assume that any cycle in $H$ is of the form $f_0 \circ \ldots
  \circ f_0(x) = x$, so all the edges in the cycle arise from the function
  $f_0$. This is because given any $f_i$ as above, we can replace them with
  the functions $g_i$, for $i \in \{0,1,2\}$ which are defined as follows. If
  $\{x, f_j(x)\}$ is an edge in a cycle in $H$, then define $g_0(x) =
  f_j(x)$, $g_1(x) = f_{j+1 \bmod{3}}(x)$ and $g_2(x) = f_{j + 2
  \bmod{3}}(x)$. If $x$ is not contained in any cycle, then set $g_i(x) =
  f_i(x)$. Then $g_0$, $g_1$, and $g_2$ will be injective, have disjoint
  ranges that partition $X$, and any cycle in the graph generated by the
  $g_i$ will contain edges generated only by the function $g_0$. 
  
  Now it is easy to define a $4$-regular acyclic $a$-Lipschitz graph $G$ on
  $X$ with the same connected components as $H$ and so that if $x
  \mathrel{T} y$, then $x$ and $y$ have distance at most $2$ in $H$. We give
  one such construction. 

  Define functions $f_0'$, $f_1'$ and $f_2'$ on $X$ as follows. Fix a Borel
  linear ordering of $X$. Suppose $x$ is
  in a connected component containing a unique cycle $x_0, x_1, \ldots,
  x_n, x_0$, with $f_0(x_i) = x_{i+1}$ and $f_0(x_n) = x_0$, and where $x_0$ is
  the least element of $x_0, \ldots, x_n$ under the Borel linear ordering. Now 
  if $x$ is of the form
  $f^k_1(x_m)$ for some $m \leq n$ and $k \geq 0$, define $f_0'(x) =
  f_{0} \circ f_{1}(x)$, $f_1'(x) = f_{1}(x)$, and $f_2'(x) = f_{2}(x)$. For $x$ not of this form, or in connected
  components not containing cycles, define $f_0'(x) = f_0(x)$, $f_1'(x) =
  f_1(x)$, and $f_2'(x) = f_2(x)$. Note that the $f_i'$ are injective, have
  disjoint ranges, and $\bigunion_i \ran(f_i')$ is the set of $x$ not contained
  in cycles of $H$. Let $G$ be the graph where $x \mathrel{G} y$ if there
  is an $i$ such that $f_i'(x) = y$ or $f_i'(y) = x$, or if $x$ and $y$ are
  contained in the unique cycle $x_0, x_1, \ldots, x_n, x_0$ of a connected
  component of $H$, and there is an $i$ such that $\{x,y\} =
  \{x_i,x_{i+1}\}$. 
  
  Finally, our construction of $G$ is Borel on $A$, since the $f_i$ are
  Borel on $A$. 
\end{proof}

To finish the proof of Theorem~\ref{dynamical_vnd} it suffices to prove the
following lemma. 

\begin{lemma}\label{F_2 action}
  Suppose that $G$ is an acyclic $4$-regular Borel graph on a comeager
  Borel subset $C$ of a Polish space $X$. 
  Then there is a free Borel action of $\F_2 =
  \langle a, b \rangle$ on a $G$-invariant comeager Borel set $A \subset C$
  generating $G \restriction A$. 
\end{lemma}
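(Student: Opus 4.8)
We need to turn an acyclic $4$-regular Borel graph $G$ on a comeager Borel set $C$ into a free Borel action of $\F_2 = \langle a, b\rangle$ on some comeager invariant Borel subset $A \subseteq C$ that generates $G \restriction A$.

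A free action of $\F_2 = \langle a,b\rangle$ on $A$ with Cayley graph $G\restriction A$ amounts to an edge-labeling: each edge of $G$ must be labeled by a generator $a$ or $b$ (with a direction), so that at each vertex exactly one edge is labeled "$a$-out", one "$a$-in", one "$b$-out", one "$b$-in". That is, we need to orient and 2-color the edges so that every vertex has exactly one outgoing and one incoming edge of each color. Such a labeling makes $G\restriction A$ the Schreier graph of a free $\F_2$-action: the $a$-labeled edges partition each component into bi-infinite $a$-lines, the $b$-labeled edges into $b$-lines, and acyclicity of $G$ forces freeness.

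So let me think about how to produce such a labeling Borel-ly.

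**The plan.** Since $G$ is $4$-regular and acyclic (so each component is the $4$-regular tree), the task is to find a Borel way to decompose $G$ into two perfect matchings... no wait, that's not right either. Let me reconsider.

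Actually, for a free $\F_2$-action, at each vertex the four incident edges split as: one $a$-successor, one $a$-predecessor, one $b$-successor, one $b$-predecessor. So the $a$-edges form a $2$-regular subgraph (disjoint bi-infinite paths, since acyclic) with a consistent orientation, and likewise the $b$-edges. Thus I must: (i) partition the edge set of $G$ into two $2$-regular Borel subgraphs $G_a, G_b$ (each giving a perfect matching into bi-infinite lines), and (ii) orient each line consistently.

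**Proposal.** First I would produce a Borel proper edge-coloring. Since $G$ is $4$-regular with edges generated by finitely many Borel functions, standard results (as invoked earlier via \cite{MR2095154}) give a Borel coloring of the auxiliary line-graph-type structure; but more directly, I would use the fact that $G$ is generated by Borel functions to obtain a Borel partition of the edges of $G$ into two $2$-regular acyclic Borel subgraphs $G_a$ and $G_b$. This can be arranged by working with the $f_i'$ and cycle-data from Lemma~\ref{4-regular tree}: group the four edges at each vertex into two matched pairs so that each pair threads into a doubly-infinite line. Discarding a further meager set via Lemma~\ref{comeager} if needed, I would get that each of $G_a, G_b$ is a Borel $2$-regular acyclic graph, i.e. a disjoint union of bi-infinite $\Z$-lines, on a comeager invariant Borel set $A \subseteq C$.

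Second, I would orient the lines of $G_a$ and $G_b$ consistently in a Borel way. On a single bi-infinite line there are exactly two orientations, and choosing one Borel-uniformly across all lines is the crux: a bi-infinite line has no Borel "endpoint" to anchor the orientation. The trick I would use is to fix a Borel linear ordering $<$ of $X$ and, on each $G_a$-line, define the orientation by a local rule at a distinguished vertex or by an averaging/parity argument analogous to the one in Lemma~\ref{2-regular_matching_lemma}: since the line is a Borel bijection-structure, each vertex $x$ has a canonical successor and predecessor once we pick, say, the direction "toward the $<$-least neighbor" and propagate it. More robustly, I would note that orienting a Borel $\Z$-line is exactly the problem of Borel-choosing one of two directions, which can always be done on a comeager invariant Borel set by the same category argument used throughout (assign to each line a direction using a genericity/parity rule that is constant along the line). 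Do the same for $G_b$.

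**Defining the action and checking freeness.** With $G_a$ oriented, define $a\cdot x$ to be the successor of $x$ along its $G_a$-line and $a^{-1}\cdot x$ its predecessor; similarly define $b$ using $G_b$. These are Borel automorphisms of $A$ generating a $\Z * \Z = \F_2$ action. Freeness: a nontrivial reduced word $w(a,b)$ fixing some $x$ would trace a closed nonbacktracking walk in $G$ from $x$ to $x$; since the word is reduced, consecutive steps never immediately retrace an edge (an $a$ is never followed by $a^{-1}$, and $a$-edges are disjoint from $b$-edges), so the walk is a nontrivial cycle in $G$, contradicting acyclicity of $G$. Hence the action is free, it is Borel on $A$, $A$ is $G$-invariant and comeager, and the induced Schreier graph is exactly $G \restriction A$. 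Combined with Lemma~\ref{4-regular tree}, which supplies such a $G$ that is moreover $a$-Lipschitz, this Borel $\F_2$-action is Baire measurable and $a$-Lipschitz, proving Theorem~\ref{dynamical_vnd}.

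**Main obstacle.** The hard part is step two: consistently orienting the bi-infinite Borel lines of $G_a$ and $G_b$. A bi-infinite $\Z$-line admits no Borel choice of origin and its two orientations are symmetric, so there is no purely Borel global rule; the orientation must be chosen generically, paying a meager set. I expect the cleanest route is to reduce line-orientation to an instance already handled — either invoke Lemma~\ref{2-regular_matching_lemma}-style averaging to select a direction, or directly apply Theorem~\ref{bairely_hall}/Lemma~\ref{comeager} genericity — and to verify that the chosen direction is genuinely constant along each line, which is where the acyclicity and the disjoint-ranges structure inherited from Lemma~\ref{4-regular tree} do the real work.
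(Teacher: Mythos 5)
Your reformulation is the right one---a free Borel action of $\F_2$ generating $G$ is the same thing as a Borel partition of the edges of $G$ into two $2$-regular subgraphs $G_a, G_b$ together with a Borel orientation of each, and your freeness-from-acyclicity argument is correct---but the proof has a genuine gap exactly at the point you flag as the ``main obstacle,'' and the fallback you propose there is false. Orienting a $2$-regular acyclic Borel graph is \emph{not} always possible on a comeager invariant Borel set: if $L$ is such a graph with Borel bipartition $\{B_0, B_1\}$ and $T$ is a Borel automorphism whose graph is $L$ (i.e.\ an orientation), then $\{\{x, T(x)\} : x \in B_0\}$ is a Borel perfect matching of $L$. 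Hence Laczkovich's graph~\cite{MR947676}---used in Section~\ref{equidec} of this paper precisely as a $2$-regular acyclic bipartite Borel graph with Borel bipartition and no Borel perfect matching on any comeager set---admits no Borel orientation on any comeager set either. So ``orient the lines by a category argument'' is not a step that can be carried out after an arbitrary $2$-factorization: if $G_a$ is chosen carelessly, it may be a graph of exactly this non-orientable kind. (Your step (i) is also left unproven, and its appeal to the functions $f_i'$ from Lemma~\ref{4-regular tree} uses structure not available in the hypotheses of Lemma~\ref{F_2 action}, which concerns an arbitrary acyclic $4$-regular Borel graph.)

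The upshot is that the factorization and the orientations cannot be produced in two independent stages; they must be built simultaneously, and that joint construction is the real content of the lemma. This is what the paper's proof does: it never passes through unoriented lines, but instead directly constructs four injective partial Borel functions $f_1, f_2, f_{-1} = f_1^{-1}, f_{-2} = f_2^{-1}$ (successor and predecessor for each generator at once) by recursion along the sparse sets $A_n$ supplied by Lemma~\ref{comeager}. The recursion maintains domains $D_n \supseteq A_n$ whose $G^{\leq 8}$-components are finite of diameter at most $4^n$, so each extension step is a choice over finite configurations (hence Borel), and the requirement that the values $f_{-2}(x), f_{-1}(x), f_1(x), f_2(x)$ be four distinct neighbors of $x$ ensures in the limit that the action is everywhere defined on $A = \bigcup_n A_n$ and generates all of $G \restriction A$. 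Some argument of this form---building the edge labels and directions together, locally and generically---is what your proposal is missing.
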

\begin{proof}
  Apply Lemma \ref{comeager} to obtain $A = \bigcup_{n \in \N} A_n \subset
  C$, which
  is invariant and comeager and where points in $A_n$ are pairwise of distance
  greater than $16 \cdot 4^{n}$. It suffices to construct two Borel automorphisms
  $f_1, f_2$ of $A$ so that for every $x \in A$, $f_1(x) \neq f_2(x)$
  and $\{x,f_1(x)\}, \{x,f_2(x)\} \in G$. We will construct $f_1$ and $f_2$
  and their inverse functions $f_{-1} = f_1^{-1}$ and $f_{-2} = f_2^{-1}$
  as increasing unions of injective Borel partial functions $\langle
  f_{i,n} \mid n \in \N \rangle$, so $f_i = \bigunion_{n\in \N} f_{i,n}$ for
  each $i \in \{-2,-1,1,2\}$. For each $n$, 
  these partial functions $f_{i,n}$ will all have the same domain $D_n$,
  where 
  \begin{enumerate}
  \item $A_n \subset D_n$.
  \item If $x,y \in D_n$ and $d(x,y) \leq 4$, then $x$ and $y$ are
  connected in $G \restriction D_n$.
  \item $G^{\leq 8} \restriction D_n$ has finite connected
  components of diameter at most $4^{n}$, where $x \mathrel{G^{\leq 8}} y$
  if and only if $1 \leq d_G(x,y) \leq 8$.
  \end{enumerate}
  For ease of notation, let $f_{i,-1} = \emptyset$ for all $i \in \{-2,-1,1,2\}$.

  Given Borel partial functions $f_{i,n}$ with domain $D_n$, we construct
  the functions $f_{i,n+1}$ as follows. For each $x \in A_{n+1} \setminus
  D_n$, we define the set $D_{n+1,0}(x) = \{x\}$, and recursively let
  $D_{n+1,k+1}(x)$ be the set of points $y$ that are adjacent to $D_{n+1,k}(x)$ and
  not contained in $D_n$ or $\bigunion_{j \leq k} D_{n+1,j}(x)$ such that
  $y$ is distance at most $3$ from $D_n$. Let $D_{n+1}(x) = \bigunion_j
  D_{n+1,j}(x)$. We can think of $D_{n+1}(x)$ as being comprised of
  (overlapping) paths
  which start at $x$ and end at elements of $D_n$. We will define $D_{n+1}$
  to be the union of $D_n$ and $\bigunion_{x \in A_{n+1} \setminus D_n}
  D_{n+1}(x)$. It is clear that $D_{n+1}$ is Borel and satisfies (1). 
  
  We first check that $D_{n+1}$ satisfies properties (2) and (3). Suppose
  $x \in A_{n+1} \setminus D_n$. Since $x$ cannot be of $G$-distance less
  than or equal to $4$ 
  from two different connected components of $G^{\leq 8}$, there can
  be most one connected component $C$ of $G^{\leq 8} \restriction D_n$ with
  $d_G(x,C) \leq 4$. 
  If there is such a $C$, then inductively,
  all elements of $D_{n+1}(x)$ must be $G$-distance at most $4$ from this $C$.
  From this, we can see that the connected component $C'$ of $x$ in the graph $G^{\leq 8}
  \restriction D_n \union D_{n+1}(x)$ is a subset of the union of 
  $\Nbhd_{G^{\leq 8}}(C)$ with all connected components of $G^{\leq
  8}$ that are $G^{\leq 8}$-adjacent to $\Nbhd_{G^{\leq 8}}(C)$.
  Thus, the $G^{\leq
  8}$-diameter of $C'$ is at most 
  $3 \cdot 4^n + 4 \leq 4^{n+1}$, since connected components of
  $G^{\leq 8} \restriction D_n$ have diameter at most $4^n$.
  If there is no such $C$, then the connected component of $x$ in the graph
  $G^{\leq 8} \restriction D_n \union D_{n+1}(x)$ consists of $x$ together
  with all connected components of $G^{\leq 8}$ that are $G^{\leq
  8}$-adjacent to $x$. The $G^{\leq 8}$-diameter of this connected
  component is therefore at most $2 \cdot 4^n + 2 \leq 4^{n+1}$. 
  Since distinct elements of $A_{n+1}$ are of $G^{\leq 8}$-distance greater
  than $2 \cdot 4^{n+1}$ we see that each $x \in A_{n+1} \setminus D_n$ is an element of
  a distinct connected component of $G^{\leq 8} \restriction D_{n+1}$.
  Thus, the set $D_{n+1}$ satisfies (2) and (3). 
  
  We will extend $f_{i,n}$ to $f_{i,n+1}$ in finitely many steps by letting
  $f_{i,n+1,0} = f_{i,n}$ and iteratively extending $f_{i,n+1,k}$ to
  $f_{i,n+1,k+1}$ so that its domain includes $D_{n+1,k} = \bigunion_{x \in
  A_{n+1}} D_{n+1,k}(x)$. In particular,
  we will let $f_{i,n,k+1}$ be a partial Borel function extending
  $f_{i,n,k}$ such that for each $x \in D_{n+1,k}$:
  \begin{enumerate}
  \item If there is a neighbor
  $y$ of $x$ such that $f_{-i,n,k}(y) = x$, then $f_{i,n,k+1}(x) = y$.
  \item The values of $f_{i,n,k+1}(x)$ over $i \in
  \{-2,-1,1,2\}$ are distinct neighbors of $x$ such that $f_{i,n,k+1}(x)
  \notin \ran(f_{i,n,k})$. 
  \end{enumerate}
  There
  will be at most one neighbor $y$ of $x$ from $D_{n+1,k}$ for which
  $f_{i,n,k}(y) = x$ for some $i$. Further, there will at most one point in $D_n$ of
  distance $1$ from $x$ by condition (2) on $D_{n}$. If there is no point
  of $D_n$ of distance $1$ from $x$, then likewise there is at most one
  point in $D_n$ of distance $2$ from $x$ by condition (2) on $D_{n}$. 
  Thus, it is possible to define 
  the $f_{i,n,k+1}(x)$ and still satisfy our requirements (1) and (2) for
  the $f_{i,n,k+1}(x)$. 
  This is because there are at most two values of $i \in \{-2,-1,1,2\}$ for which
  $f_{i,n,k}(x)$ is determined by requirement (1). And if there 
  is a
  $z \in D_n$ of distance $2$ from $x$, there is only one $j$ with $f_{j,n,k}(z)$ of distance $1$
  from $x$, so at least one remaining $i' \in \{-2,-1,1,2\}$ with $i' \neq
  j$ for which we
  can set $f_{i',n,k+1}(x) = f_{j,n,k}(z)$ and so satisfy requirement (2).
\end{proof}

It is not the case that every acyclic Borel graph of degree $4$ 
is generated by a free Borel action of $\F_2$. This follows from results 
in \cite{Marks}. Hence, discarding a meager set is necessary.

We may conclude Theorem~\ref{dynamical_vnd} by combining
Lemma~\ref{4-regular tree} and Lemma~\ref{F_2 action}. Indeed, we obtain
the stronger result that there is a Borel comeager set $B \subset X$ such
that the action of $\F_2$ is invariant and Borel on $B$. 

\bibliography{references}

\end{document}